\newtheorem{theorem}{Theorem}[section]
\newtheorem{proposition}[theorem]{Proposition}
\newtheorem{defi}[theorem]{Definition}
\newtheorem{remark}[theorem]{Remark}
\renewcommand \parallel {/\kern-3pt/}
\newcommand \C {\mathbb C}
\newcommand \N {\mathbb N}
\newcommand \Z {\mathbb Z}
\newcommand \ben {\begin{enumerate}}
\newcommand \een {\end{enumerate}}
\newcommand \z{\mathfrak{z}}
\newcommand{\n}{\mathfrak{n}}
\renewcommand{\v}{\mathfrak{v}}
\newcommand{\la}{\lambda}
\numberwithin{equation}{section}
\begin{document}

\title[Some remarks on graded nilpotent Lie algebras and the TRC]
{Some remarks on graded nilpotent Lie algebras and the Toral Rank Conjecture}

\author{Guillermo Ames}
\address{Universidad Tecnol\'{o}gica Nacional, Facultad Regional C\'{o}rdoba, Argentina.}
\email{lames@scdt.frc.utn.edu.ar}
\thanks{The first author was partially supported by UTN-FRC and SECYT-UNC grants.}

\author{Leandro Cagliero}
\address{CIEM-CONICET, FAMAF-Universidad Nacional de C\'ordoba, C\'ordoba, Argentina.}
\email{cagliero@famaf.unc.edu.ar}
\thanks{The second author was supported in part by CONICET, CIUNSa and SECYT-UNC grants.}

\author{M\'{o}nica Cruz}
\address{Facultad de Ciencias Exactas, Universidad Nacional de Salta, Argentina.}
\email{monicanancy@gmail.com}
\thanks{The third author was supported in part by a CIUNSa grant}



\begin{abstract}
If
$\mathfrak{n}$ is a $\Z^d_+$-graded nilpotent finite dimensional Lie algebra over a field of characteristic zero,
it is well known that
$
\dim H^{\ast }(\mathfrak{n})\geq L(p)
$
where $p$ is the polynomial associated to the grading and
$L(p)$ is the sum of the absolute values of the coefficients of $p$.
From this result Deninger and Singhof derived the Toral Rank Conjecture (TRC)
for 2-step nilpotent Lie algebras.
An algebraic version of the TRC states that
$
\dim H^{\ast }(\mathfrak{n})\geq 2^{\dim (\mathfrak{z)}}
$
for any finite dimensional Lie algebra $\n$ with center $\z$.

The TRC is more that 25 years old and remains open
even for $\Z^d_+$-graded 3-step nilpotent Lie algebras.
Investigating to what extent the above bound for $\dim H^{\ast }(\mathfrak{n})$
could help to prove the TRC in this case, we considered the following two questions
regarding a nilpotent Lie algebra $\n$ with center $\z$:
(A) If $\n$ admits a $\Z_+^d$-grading
 $\n=\bigoplus_{\alpha\in\Z_+^d} \n_\alpha$,
such that its associated polynomial $p$ satisfies $L(p)>2^{\dim\z}$,
does it admit a grading
$\mathfrak{n}=\mathfrak{n}'_{1}\oplus \mathfrak{n}'_{2}\oplus \dots\oplus \mathfrak{n}'_{k}$
such that its associated polynomial $p'$ satisfies $L(p')>2^{\dim\z}$?
(B)  If $\n$ is $r$-step nilpotent admitting a grading
$\mathfrak{n}=\mathfrak{n}_{1}\oplus \mathfrak{n}_{2}\oplus \dots\oplus \mathfrak{n}_{k}$
such that its associated polynomial $p$ satisfies $L(p)>2^{\dim\z}$,
does it admit a grading
$\mathfrak{n}=\mathfrak{n}'_{1}\oplus \mathfrak{n}'_{2}\oplus \dots\oplus \mathfrak{n}'_{r}$
such that its associated polynomial $p'$ satisfies $L(p')>2^{\dim\z}$?
In this paper we show that the answer to (A) is yes, but the answer to (B) is no.
\end{abstract}

\maketitle

\section{Introduction}


Let $\Z_+^d=\{(\alpha_1,\dots,\alpha_n)\ne 0:\alpha_i\in\Z_{\ge0}\}$.
Given a $\Z_+^d$-graded finite dimensional nilpotent Lie algebra over a field of characteristic zero
\[
\n=\bigoplus_{\alpha\in\Z_+^d} \n_\alpha,
\]
(that is $[\n_\alpha, \n_\beta]\subset\n_{\alpha+\beta}$),
Deninger and Singhof \cite{DS} considered
the \emph{polynomial associated} to the grading
\begin{align*}
 p(x_1,\dots,x_d)&=\prod_{\alpha\in\Z_+^d}  (1-x_1^{\alpha_1}\cdots x_d^{\alpha_d})^{d_\alpha},\qquad
d_\alpha=\dim \n_\alpha, \\
                 &=\sum_{\alpha\in\Z_+^d} a_\alpha x_1^{\alpha_1}\cdots x_d^{\alpha_d}\in\Z[x_1,\dots,x_d];
\end{align*}
and proved that
\begin{equation}\label{eq.DS}
 \dim H^{\ast }(\mathfrak{n})\geq L(p)
\end{equation}
where
$
L(p)=\sum_{\alpha\in\Z_+^d} |a_\alpha|
$
is the \emph{length} of $p$ (see also \cite[Theorem 1]{Ma}).

If $\n=\v\oplus\z$ is a 2-step nilpotent Lie algebra,
with center $\z=[\n,\n]$, then $\n_1=\v$ and $\n_2=\z$
defines a $\Z$-grading on $\n$.
Deninger and Singhof considered in \cite{DS} this extremely
particular instance of their result
to obtain the Toral Rank Conjecture (TRC) for 2-step nilpotent
Lie algebras.

Recall that the TRC was formulated by Halperin \cite{Hal}
more than 25 years ago and an algebraic version of it is the following:

\medskip
\noindent
\textbf{TRC.}
If $\n$ is a finite dimensional nilpotent Lie algebra with center $\z$, then
\begin{equation*}
\dim H^{\ast }(\mathfrak{n})\geq 2^{\dim \z}.
\end{equation*}

This conjecture has a topological origin:
the toral rank $r(X)$ of a differentiable manifold $X$ is
the dimension of the greatest torus acting freely on $X$.
Originally, the TRC states that the homology of the
manifold $X$ has dimension greater than or equal to $2^{r(X)}$.
It follows from a theorem of Nomizu \cite{N} that, for compact nilmanifolds,
the original TRC would follow from the algebraic version stated above.

This conjecture remains open in general (see \cite{PT} or \cite{Y} for more information).
Although a very particular instance of
\eqref{eq.DS} implies the TRC for 2-step nilpotent Lie algebras,
the TRC remains open even for $\Z_+^d$-graded 3-step nilpotent Lie algebras
and it is natural to ask to what extent the result of Deninger and Singhof
could help to prove the TRC in this case.
It might be worth mentioning that sometimes \eqref{eq.DS} is pretty accurate,
for instance
\[
 \dim H^{\ast }(\mathfrak{n})=L(p)
\]
for Heisenberg Lie algebras (of arbitrary dimensions) and for
every nilpotent Lie algebra of dim $\le 6$,
(when they are given an appropriate grading \cite[\S1.4.4 and \S1.6.1]{Ma}).

\medskip

Given a $\Z_+^d$-graded nilpotent Lie algebra,
it is in general very difficult to consider, in the context of \eqref{eq.DS},
all its possible $\Z_+^d$-gradings, even for the 3-step case.
Thus we centered our attention to the following two questions
regarding a finite dimensional nilpotent Lie algebra $\n$ with center $\z$:
\begin{enumerate}[(A)]
 \item If $\n$ admits a $\Z_+^d$-grading
 $\n=\bigoplus_{\alpha\in\Z_+^d} \n_\alpha$,
such that its associated polynomial $p$ satisfies $L(p)>2^{\dim\z}$,
does it admit a grading
$\mathfrak{n}=\mathfrak{n}'_{1}\oplus \mathfrak{n}'_{2}\oplus \dots\oplus \mathfrak{n}'_{k}$
such that its associated polynomial $p'$ satisfies $L(p')>2^{\dim\z}$?

 \item If $\n$ is $r$-step nilpotent admitting a grading
$\mathfrak{n}=\mathfrak{n}_{1}\oplus \mathfrak{n}_{2}\oplus \dots\oplus \mathfrak{n}_{k}$
such that its associated polynomial $p$ satisfies $L(p)>2^{\dim\z}$,
does it admit a grading
$\mathfrak{n}=\mathfrak{n}'_{1}\oplus \mathfrak{n}'_{2}\oplus \dots\oplus \mathfrak{n}'_{r}$
such that its associated polynomial $p'$ satisfies $L(p')>2^{\dim\z}$?
\end{enumerate}

In this short paper we show that the answer to (A) is yes but the answer to (B) is no, giving an example for $r=3$.

We do not know an example of a $\Z_+$-graded nilpotent Lie algebra satisfying $L(p)<2^{\dim\z}$
for all its possible gradings.

\section{The answer to (A) is yes}
\begin{proposition}
If $\n=\bigoplus_{\alpha\in\Z_+^d} \n_\alpha$ is $\Z_+^d$-graded, with associated polynomial $p$,
then there exists a grading
$\mathfrak{n}=\mathfrak{n}'_{1}\oplus \mathfrak{n}'_{2}\oplus \dots\oplus \mathfrak{n}'_{k}$
such that its associated polynomial $p'$ satisfies $L(p')=L(p)$.
\end{proposition}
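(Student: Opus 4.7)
The strategy is to produce the required $\Z_+$-grading by pushing the given $\Z_+^d$-grading along a linear form $\varphi:\Z^d\to\Z$ with positive integer coefficients, chosen generically enough that no cancellations are introduced in the associated polynomial.

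First, for any positive integers $n_1,\dots,n_d$, consider the additive map
\[
\varphi(\alpha_1,\dots,\alpha_d)=n_1\alpha_1+\cdots+n_d\alpha_d,
\]
which sends $\Z_+^d$ into $\Z_+$. Setting
\[
\n'_j=\bigoplus_{\alpha\in\Z_+^d,\ \varphi(\alpha)=j}\n_\alpha
\]
gives a grading $\n=\n'_1\oplus\cdots\oplus\n'_k$ (with $k=\max_{\alpha\in\mathrm{supp}(p)}\varphi(\alpha)$, some components possibly zero), since $[\n'_j,\n'_\ell]\subset\n'_{j+\ell}$ follows from additivity of $\varphi$ and the original bracket relations. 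Grouping the factors of $p$ according to the fibers of $\varphi$, the associated polynomial of the new grading is
\[
p'(t)=\prod_{j\geq 1}(1-t^j)^{\dim\n'_j}=\prod_{\alpha\in\Z_+^d}(1-t^{\varphi(\alpha)})^{d_\alpha}=p(t^{n_1},\dots,t^{n_d}).
\]

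Second, I would choose $n_1,\dots,n_d$ so that $\varphi$ is injective on the finite set $\mathrm{supp}(p)=\{\alpha:a_\alpha\ne 0\}$. Such a choice always exists: taking, for instance, $n_i=N^{i-1}$ with $N$ strictly greater than every entry of every $\alpha\in\mathrm{supp}(p)$ realizes $\varphi|_{\mathrm{supp}(p)}$ as a base-$N$ expansion, hence injective. With such a choice, writing $p=\sum_\alpha a_\alpha x^\alpha$, the monomials $t^{\varphi(\alpha)}$ appearing in $p'(t)=\sum_\alpha a_\alpha t^{\varphi(\alpha)}$ are pairwise distinct, so no cancellation occurs and
\[
L(p')=\sum_\alpha|a_\alpha|=L(p).
\]

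The main (and essentially only) point to check is that the finite set of exponent vectors occurring in $p$ can be separated by a single $\Z$-valued linear form with positive integer coefficients; this is elementary, so I do not expect any serious obstacle in carrying out the argument.
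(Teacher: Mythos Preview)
Your proof is correct and follows essentially the same idea as the paper: both reduce the $\Z_+^d$-grading to a $\Z_+$-grading via the monomial substitution $x_i\mapsto t^{n_i}$, choosing the exponents large enough that no two monomials in the support of $p$ collide. The only cosmetic difference is that the paper carries this out by induction on $d$, eliminating one variable at a time via $x_d\mapsto x_1^m$, whereas you do it in one step with $n_i=N^{i-1}$; unwinding the paper's induction yields exactly a choice of this type.
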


\begin{proof}
If $d=1$ there is nothing to prove.
We now assume that the proposition is proved for
$\Z_+^{d-1}$-graded  Lie algebras
and let
$
\n=\bigoplus_{\alpha\in\Z_+^d} \n_\alpha,
$
be a $\Z_+^d$-graded nilpotent Lie algebra
with associated polynomial $p$.
Let us fix $m\in\N$ and consider the $\Z_+^{d-1}$-grading of $\n$ defined by
\[
\n=\bigoplus_{\beta\in\Z_+^{d-1}} \n_{\beta}
\]
where $\n_{\beta}=\bigoplus_{t=0}^{\left\lfloor\frac{\beta_1}{m}\right\rfloor}\n_{(\beta_1-mt,\beta_2,\dots,\beta_{d-1},t)}$.
It is straightforward to see that, if $p_m$ is the polynomial associated to this $\Z_+^{d-1}$-grading, then
\[
p_m(x_1,\dots,x_{d-1})=p(x_1,\dots,x_{d-1},x_1^m).
\]
It is clear that
\[
L(p_m)=\sum_{\beta\in\Z_+^{d-1}}
\Big|\sum_{t=0}^{\left\lfloor\frac{\beta_1}{m}\right\rfloor} a_{(\beta_1-mt,\beta_2,\dots,\beta_{d-1},t)}\Big|
\]
and, if
$
 m>\max\{\alpha_1:a_{(\alpha_1,\dots,\alpha_{d})}\ne0\},
$
then $L(p_m)=L(p)$, since $a_{(\beta_1-mt,\dots,\beta_{d-1},t)}\ne 0$ only if
$t=\left\lfloor\frac{\beta_1}{m}\right\rfloor$.

By the induction hypothesis there exits a grading
$\mathfrak{n}=\mathfrak{n}'_{1}\oplus \mathfrak{n}'_{2}\oplus \dots\oplus \mathfrak{n}'_{k}$
such that its associated polynomial $p'$ satisfies $L(p')=L(p_m)=L(p)$.
This completes the induction step.
\end{proof}

\section{The answer to (B) is no}
In this section we construct a family $\n=\n(n)$, $n\in\mathbb{N}$, of graded 3-step nilpotent Lie algebras
with center $\z$ 
such that $L(p)<2^{\dim\z}$ for all gradings $\n(n)=\n_1\oplus\n_2\oplus\n_3$ of $\n(n)$, for all $n\ge17$,
but admitting a $\Z$-grading whose associated polynomial has length greater that $2^{\dim\z}$.

\subsection{Definition of the family $\n(n)$}
In what follows, if $A$ is a set, $\langle A\rangle$ will denote the free vector space with $A$ as a basis.
For each positive integer $n$, let
\[
\begin{matrix}
E_n=\langle\{e_i:i=1,\dots,n\}\rangle, & U_n=\langle\{u_i:i=1,\dots,n\}\rangle, \\[2mm]
X_n=\langle\{x_i:i=1,\dots,n\}\rangle, & Y_n=\langle\{y_i:i=1,\dots,n\}\rangle.
\end{matrix}
\]

Since $n$ will be fixed most of the time, we will
use $E$, $U$, $X$ and $Y$ to denote the spaces $E_n$, $U_n$, $X_n$ e $Y_n$.
We will define on the vector space
\[
\mathfrak{n}=\n(n)=\n_1\oplus\n_2\oplus\n_3
\]
where
\begin{align*}
\mathfrak{n}_{1}&=E\oplus\langle\{ a, b, x\}\rangle, \\
\mathfrak{n}_{2}&=\langle\{  u, y\}\rangle \oplus \Lambda ^{2} E
			  \oplus \langle\{ c\} \rangle\oplus X, \\
\mathfrak{n}_{3}&=U \oplus Y \oplus \langle\{f,h\}\rangle,
\end{align*}
and let
\begin{align*}
\mathfrak{B}_1 &=\{e_{1},e_{2},..,e_{n},a,b,x\}, \\
\mathfrak{B}_2 &=\{u,y\}\cup \{e_{i}\wedge e_{j}:\,1\le i<j\le n\}
		      \cup\{c\}\cup\{x_{i}:\,1\le i\le n\}, \\
\mathfrak{B}_3 &=\{u_1,\dots,u_n,y_1,\dots,y_n,f,h\}.
\end{align*}
be ordered basis of $\mathfrak{n}_{1}$, $\mathfrak{n}_{2}$ and $\mathfrak{n}_{3}$
respectively (we choose the lexicographic order for $e_i\wedge e_j$).
Now
\[\mathfrak{B} =\mathfrak{B}_1 \cup\mathfrak{B}_2 \cup\mathfrak{B}_3 \]
is an ordered basis of $\n$.
It is clear that
\begin{align*}
d_{1}&=\dim (\mathfrak{n}_{1})=n+3, \\
d_{2}&=\dim (\mathfrak{n}_{2})=\frac{n(n+1)}{2}+3, \\
d_{3}&=\dim (\mathfrak{n}_{3})=2(n+1).
\end{align*}

We now define the Lie bracket of $\n$ in terms of this basis as shown
in the following table:
{\scriptsize{
\begin{equation*}
\setlength{\extrarowheight}{5pt}
\begin{tabular}{|c||c|c|c|c|c|c||c|c|c|c|c||c|c|c|c|}
\hline
$\!\![{\mathfrak{n},\mathfrak{n}}]\!\!$ & ${ e}_{1}$ & ${ \cdots }$ & ${ e}_{n}$ & ${ a}$ & ${ b}$ & ${ x}$ & ${ u}$ & $
{ y}$ &$\!\!{ \Lambda }^{2}{ E}\!\!$ & ${ c}$ & ${ X}$ & ${U}$ & ${Y}$ & ${ f}$ & ${ h}$ \\ \hline \\[-5mm] \hline
${ e}_{1}$ & ${ 0}$ & ${ \cdots }$ & $\!\!\!{ e}_{1}{ \wedge e}_{n}\!\!\!$ & ${ 0}$ & ${ 0}$ & ${ x}_{1}$ & ${ u}_{1}$ & ${ y}_{1}$
& ${ 0}$ & ${ 0}$ & ${ 0}$ & ${ 0}$ & ${ 0}$ & ${ 0}$ & ${ 0}$ \\ \hline
${ \vdots }$ & ${ \vdots }$ & ${ \ddots }$ & ${ \vdots }$ & ${ \vdots }$ & ${ \vdots }$ & ${ \vdots }$ & ${ \vdots }$ & ${ \vdots }$ & ${ \vdots }$ & ${ \vdots }$ & ${ \vdots }$ & ${ \vdots }$ & ${ \vdots }$ & ${ \vdots }$ & ${ \vdots }$ \\ \hline
${ e}_{n}$ & $\!\!\!{ e}_{n}{ \wedge e}_{1}\!\!\!$ & ${ \cdots }$ & ${ 0}$ & ${ 0}$ & ${ 0}$ & ${ x}_{n}$ & ${ u}
_{n}$ & ${ y}_{n}$ & ${ 0}$ & ${ 0}$ & ${ 0}$ & ${ 0}$ & ${ 0}$ & ${ 0}$ & ${ 0}$ \\ \hline
${ a}$ & ${ 0}$ & ${ \cdots }$ & ${ 0}$ & ${ 0}$ & ${ c}$ & ${ 0}$ & ${ 0}$ & ${ f}$ & ${ 0}$
& ${ h}$ & ${ 0}$ & ${ 0}$ & ${ 0}$ & ${ 0}$ & ${ 0}$ \\ \hline
${ b}$ & ${ 0}$ & ${ \cdots }$ & ${ 0}$ & ${ -c}$ & ${ 0}$ & ${ 0}$ & ${ h}$ & ${ h}$ & ${ 0}$
& ${ 0}$ & ${ 0}$ & ${ 0}$ & ${ 0}$ & ${ 0}$ & ${ 0}$ \\ \hline
${ x}$ & ${ -x}_{1}$ & ${ \cdots }$ & ${ -x}_{n}$ & ${ 0}$ & ${ 0}$ & ${ 0}$ & ${ f}$ & ${ h}$ & ${ 0}$ & ${ 0}$ & ${ 0}$ & ${ 0}$ & ${ 0}$ & $
{ 0}$ & ${ 0}$ \\ \hline \\[-5mm] \hline
${ u}$ & ${ -u}_{1}$ & ${ \cdots }$ & ${ -u}_{n}$ & ${ 0}$ & ${ -h}$ & ${ -f}$ & ${ 0}$ & ${ 0}$ & ${ 0}$ & ${ 0}$ & ${ 0}$ & ${ 0}$ & ${ 0}$ & $
{ 0}$ & ${ 0}$ \\ \hline
${ y}$ & ${ -y}_{1}$ & ${ \cdots }$ & ${ -y}_{n}$ & ${ -f}$ & ${ -h}$ & ${ -h}$ & ${ 0}$ & ${ 0}$ &
${ 0}$ & ${ 0}$ & ${ 0}$ & ${ 0}$ & ${ 0}$ & ${ 0}$ & ${ 0}$ \\ \hline
$\!\!{ \Lambda }^{2}{ E}\!\!$ & ${ 0}$ & ${ \cdots }$ & ${ 0}$ & ${ 0}$ & ${ 0}$ & ${ 0}$ & ${ 0}$ & ${ 0}$ & ${ 0}$ & ${ 0}$ & ${ 0}$ & ${ 0}$ & $
{ 0}$ & ${ 0}$ & ${ 0}$ \\ \hline
${ c}$ & ${ 0}$ & ${ \cdots }$ & ${ 0}$ & ${ -h}$ & ${ 0}$ & ${ 0}$ & ${ 0}$ & ${ 0}$ & ${ 0}$
& ${ 0}$ & ${ 0}$ & ${ 0}$ & ${ 0}$ & ${ 0}$ &${ 0}$ \\ \hline
${X}$ & ${ 0}$ & ${ \cdots }$ & ${ 0}$ & ${ 0}$ & ${ 0}$ & ${ 0}$ & ${ 0}$ & ${ 0}$ & ${ 0}$
& ${ 0}$ & ${ 0}$ & ${ 0}$ & ${ 0}$ & ${ 0}$ &${ 0}$ \\ \hline \\[-5mm] \hline
${ U}$ & ${ 0}$ & ${ \cdots }$ & ${ 0}$ & ${ 0}$ & ${ 0}$ & ${ 0}$ & ${ 0}$ & ${ 0}$ & ${ 0}$
& ${ 0}$ & ${ 0}$ & ${ 0}$ & ${ 0}$ & ${ 0}$ &${ 0}$ \\ \hline
${Y}$ & ${ 0}$ & ${ \cdots }$ & ${ 0}$ & ${ 0}$ & ${ 0}$ & ${ 0}$ & ${ 0}$ & ${ 0}$ & ${ 0}$
& ${ 0}$ & ${ 0}$ & ${ 0}$ & ${ 0}$ & ${ 0}$ & ${ 0}$ \\ \hline
${ f}$ & ${ 0}$ & ${ \cdots }$ & ${ 0}$ & ${ 0}$ & ${ 0}$ & ${ 0}$ & ${ 0}$ & ${ 0}$ & ${ 0}$
& ${ 0}$ & ${ 0}$ & ${ 0}$ & ${ 0}$ & ${ 0}$ & ${ 0}$ \\ \hline
${ h}$ & ${ 0}$ & ${ \cdots }$ & ${ 0}$ & ${ 0}$ & ${ 0}$ & ${ 0}$ & ${ 0}$ & ${ 0}$ & ${ 0}$
& ${ 0}$ & ${ 0}$ & ${ 0}$ & ${ 0}$ & ${ 0}$ &${ 0}$ \\ \hline
\end{tabular}
\end{equation*}
}}

\begin{remark}\label{rk.independence of basis}
Suppose that we change the basis $B=\{e_i\}$ of $E$ by $B'=\{e_i'\}$, and we
define accordingly $x_i'=[e_i',x]$, $y_i'=[e_i',y]$, $u_i'=[e_i',u]$.
If we consider the following new ordered basis of $\n_1$, $\n_2$ and $\n_3$
\begin{align*}
\mathfrak{B}_1'&=\{e_{1}',e_{2}',..,e_{n}',a,b,x\} \\
\mathfrak{B}_2'&=\{u,y\}\cup \{e_{i}'\wedge e_{j}':\,1\le i<j\le n\}\cup \{c\}\cup \{x_{i}':\,1\le i\le n\} \\
\mathfrak{B}_3'&=\{u_1',\dots,u_n',y_1',\dots,y_n',f,h\}
\end{align*}
then the above bracket-table looks the same for the new basis
$\mathfrak{B}'=\mathfrak{B}_1' \cup\mathfrak{B}_2' \cup\mathfrak{B}_3'$ of $\n$.
From now on, we will think of $\mathfrak{B}$ as a map that assigns a basis $B'$ of $E$
to  $\mathfrak{B}(B')=\mathfrak{B}'$.
\end{remark}

\begin{proposition}\label{prop.dim}
For every $n\in\mathbb N$, $\n(n)$ is a graded 3-step nilpotent Lie algebra with
\[
 d_{1}=n+3,\quad d_{2}=\frac{n(n+1)}2+3,\quad d_{3}=2n+2,
\]\[
 d_{2}^0=2,\quad d_{2}^1=1,\quad z_{2}=\frac{n(n+1)}{2},\quad z=\frac{(n+4)(n+1)}{2}.
\]
\end{proposition}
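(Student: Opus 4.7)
The statement is a list of numerical assertions about $\n(n)$, and the plan is straightforward bookkeeping. The dimensions $d_1=n+3$ and $d_3=2n+2$ are immediate from the direct-sum decompositions, while $d_2=2+\binom{n}{2}+1+n=\frac{n(n+1)}{2}+3$ uses $\dim\Lambda^2 E=\binom{n}{2}$. To confirm that the displayed table really defines a Lie algebra, I would read antisymmetry off by comparing each entry with its transpose, and handle the Jacobi identity using the proposed grading itself: the table exhibits $[\n_i,\n_j]\subset\n_{i+j}$ with $\n_{\ge 4}=0$, so every Jacobi cycle of total degree $\ge 4$ vanishes automatically. The remaining check is over triples in $\mathfrak{B}_1\times\mathfrak{B}_1\times\mathfrak{B}_1$; since the only non-zero brackets inside $\mathfrak{B}_1$ are $[e_i,e_j]$, $[e_i,x]$ and $[a,b]$, and since $\Lambda^2 E$ and $X$ bracket-commute with all of $\n_1$, only the Jacobi cycles built from the triples $\{a,b,x\}$ and $\{a,b,e_i\}$ can survive, and each is dispatched in one line.

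Three-step nilpotency is built into the grading, with non-triviality of the third stratum witnessed by $[a,[a,b]]=[a,c]=h\ne 0$. To identify the center $\z$ I would scan the table column by column: the whole $\n_3$ block is zero, so $\n_3\subset\z$; within $\n_2$ only the $\Lambda^2 E$ and $X$ columns are identically zero (while $u$, $y$ and $c$ each carry at least one non-zero entry); and within $\n_1$ every basis vector brackets non-trivially with something. Hence $\z=\n_3\oplus\Lambda^2 E\oplus X$, which gives
\[
z_2=\binom{n}{2}+n=\frac{n(n+1)}{2}, \qquad z=(2n+2)+\binom{n}{2}+n=\frac{(n+1)(n+4)}{2}.
\]
Finally, $d_2^0$ and $d_2^1$ record the dimensions of the complementary pieces of $\n_2$ relative to $[\n_1,\n_1]\cap\n_2=\Lambda^2 E\oplus X\oplus\langle c\rangle$: namely $d_2^0=\dim\bigl(\n_2/([\n_1,\n_1]\cap\n_2)\bigr)=\dim\langle u,y\rangle=2$ and $d_2^1=\dim\bigl(([\n_1,\n_1]\cap\n_2)/\z_2\bigr)=\dim\langle c\rangle=1$.

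The only step with any real substance is the Jacobi verification, and even there the grading reduces the task to a very short, finite case analysis on triples from $\mathfrak{B}_1$; everything else is dimension counting and column-scanning. I therefore expect no significant obstacle, provided the small number of surviving Jacobi triples are checked carefully.
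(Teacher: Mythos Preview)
Your proposal is correct and in fact considerably more thorough than the paper's own proof, which is a single sentence addressing only the Jacobi identity: the paper observes that the \emph{only} basis triple $(t,v,w)$ with $[[t,v],w]\ne 0$ is $t=b$, $v=w=a$, and since this triple has a repeated entry the Jacobi relation holds automatically by antisymmetry. This is slicker than your grading-based reduction to triples from $\mathfrak{B}_1$ (and in fact your two ``surviving'' triples $\{a,b,x\}$ and $\{a,b,e_i\}$ already have all three cyclic terms equal to zero, since $[c,x]=[c,e_i]=0$), but your route works just as well and makes the structure of the argument more transparent. The paper says nothing at all about the center or the numbers $d_2^0$, $d_2^1$, $z_2$, $z$, so your bookkeeping there is a genuine addition rather than a different proof of the same thing.

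One small point worth tightening: ``column-scanning'' shows that no individual basis vector outside $\n_3\oplus\Lambda^2 E\oplus X$ is central, but not that no linear combination is. You should remark that the center of a $\Z_+$-graded Lie algebra is itself graded, and then check that $\z\cap\n_1=0$ and $\z\cap\n_2=\Lambda^2 E\oplus X$ by bracketing a general element against $u$, $a$, $b$; both checks are one line.
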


\begin{proof}The only basis elements $t,v,w$ such that $[ [t,v],w]\ne 0$ are $t=b$ and $v=w=a$.
Therefore, the Jacobi's identity is trivially satisfied in $\mathfrak{n}$.
\end{proof}


\subsection{Derivations of $\n(n)$}
Let $\text{Der}(\mathfrak{n})$ be the Lie algebra of derivations of $\n$.
In this subsection we will describe some properties of the matrices corresponding to
elements in $\text{Der}(\mathfrak{n})$ associated to a basis $\mathfrak{B}(B)$, where $B$ is a basis of $E$.

\begin{defi}
We denote by $\text{Der}(\mathfrak{n})_{0}$ the subalgebra of derivations
$D$ such that
\begin{itemize}
\item[--] $D(E)\subset E$ and
\item[--] $D(a)=D(b)=D(x)=D(u)=D(y)=0$.
\end{itemize}
\end{defi}

It is clear that there is Lie algebra isomorphism
\begin{align*}
 {\mathfrak{gl}}(E)&\to\text{Der}(\mathfrak{n})_{0} \\
A&\mapsto D_A.
\end{align*}
The matrix $[D_A]_{\mathfrak{B}(B)}$ of $D_A$ in the basis $\mathfrak{B}(B)$ is block-diagonal,
where the blocks corresponding to each subspace are described by the following table:
\medskip
\begin{equation}\label{d0}
\begin{tabular}{|cccc|ccccc|cccc|}
\hline
$E$ & $a$ & $b$ & $x$ & $u$ & $y$ & $\Lambda^{2}E$ & $c$ & $X$ &
$U$ & $Y$ & $f$ & $h$\rule[-1mm]{0mm}{5mm} \\
\hline
$[A]_B$ & 0 & 0 & 0& 0 & 0 & $[\Lambda^{2}A]_{\Lambda^{2}B}$ & 0 & $[A]_B$ &$[A]_B$ &$[A]_B$ & 0& 0
\rule[-1mm]{0mm}{5mm} \\
\hline
\end{tabular}
\end{equation}
\medskip

\noindent
Here, $\Lambda^{2}B=\{e_{i}\wedge e_{j}:\,1\le i<j\le n\}$
and
$\Lambda^{2}A$ is the linear map on $\Lambda^2 E$ defined by
$\Lambda^{2}A(e_{i}\wedge e_{j})=A(e_{i})\wedge e_{j}+e_{i}\wedge A(e_{j})$.
\begin{defi}
We denote by $\text{Der}(\mathfrak{n})_{1}$ the set of derivations $D$ such that
$D(E)\subset W$, where
$W=\langle\{ a, b, x, u, y\}\rangle \oplus \Lambda ^{2} E \oplus \langle\{ c\} \rangle\oplus
X \oplus U \oplus Y \oplus \langle\{f,h\}\rangle.$
\end{defi}

\begin{proposition}\label{triang}
If $B$ is a basis of $E$ and $D\in \text{Der}(\mathfrak{n})_{1}$,
then the matrix of $D$ in the basis $\mathfrak{B}(B)$ is lower triangular.
\end{proposition}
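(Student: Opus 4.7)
The plan is to process the basis elements of $\mathfrak{B}(B)$ in the prescribed order and show, column by column, that $D(v)$ lies in the span of basis vectors at or after $v$, by repeatedly applying the Leibniz identity $D([v,w])=[D(v),w]+[v,D(w)]$ to carefully chosen pairs from the bracket table. The first $n$ columns, indexed by $E$, are handled immediately by the hypothesis $D(E)\subset W$, since $W$ is by definition the span of all basis vectors past $E$.

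I would then constrain $D(a), D(b), D(x)$ (and further constrain each $D(e_i)$) by applying $D$ to the relations $[e_i,a]=0$, $[e_i,b]=0$, and $[e_i,x]=x_i$. Because $D(e_i)\in W$, each bracket $[D(e_i),v]$ with $v\in\{a,b,x\}$ lands in a very small subspace (only a handful of components of $D(e_i)$ can contribute, via entries like $[b,a]=-c$, $[a,b]=c$, $[u,b]=h$, $[u,x]=-f$, etc.), whereas $[e_i,D(v)]$ reads off precisely the $E, x, u, y$-components of $D(v)$ via $[e_i,e_j]=e_i\wedge e_j$, $[e_i,x]=x_i$, $[e_i,u]=u_i$, $[e_i,y]=y_i$. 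Matching coefficients basis-by-basis simultaneously kills the $a, b, c, u, y$-components of each $D(e_i)$ and forces $D(a), D(b), D(x)$ to have no $E, x, u, y$-components. A handful of further identities --- $[b,b]=0$, $[a,x]=0$, $[b,x]=0$, $[x,x]=0$ --- then eliminate the remaining early components (the $a$-component of $D(b)$, and the $a, b, c$- and $E$-components of $D(x)$).

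Next I would turn to $D(u)$ and $D(y)$. Leibniz on $[e_i,u]=u_i$ and $[e_i,y]=y_i$ immediately strips off the $E$- and $x$-components (and additionally the $u$-component of $D(y)$). The remaining early components --- the $a$-component of $D(u)$ and the $a, b$-components of $D(y)$ --- are the subtlest point. I would kill two of them at once using $[u,y]=0$: expanding gives $(\text{$a$-coeff of }D(u))\,f-(\text{$b$-coeff of }D(y))\,h=0$, forcing both coefficients to vanish. The last remaining obstacle, the $a$-coefficient of $D(y)$, is eliminated by computing $D(h)$ in two different ways via $h=[x,y]$ and $h=[b,y]$: the first expression lies in $\langle h\rangle$, while the second contains a $c$-term proportional to the $a$-coefficient of $D(y)$, so equating them forces that coefficient to vanish as well.

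With these constraints in place, every remaining basis vector --- $e_i\wedge e_j$, $c$, $x_i$, $u_i$, $y_i$, $f$, $h$ --- is expressible as a bracket of elements already controlled ($e_i\wedge e_j=[e_i,e_j]$, $c=[a,b]$, $x_i=[e_i,x]$, $u_i=[e_i,u]$, $y_i=[e_i,y]$, $f=[a,y]$, $h=[x,y]$), so a single Leibniz expansion plus substitution places $D$ of each into the required lower-triangular subspace (for instance, $D(e_i\wedge e_j)\in X$, $D(c)\in\langle c,h\rangle$, $D(x_i)\in\langle x_i\rangle$, $D(u_i)\in\langle u_i,f\rangle$, $D(y_i)\in\langle y_i,h\rangle$, $D(f)\in\langle f,h\rangle$, and $D(h)\in\langle h\rangle$). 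The main obstacle is not conceptual but combinatorial: one must keep careful track of which components remain alive at each stage and pick the right bracket identity to kill them. The two essential tricks --- using $[u,y]=0$ and comparing two different expressions for $D(h)$ --- are the only non-routine points.
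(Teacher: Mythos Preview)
Your outline has the right shape but contains two genuine gaps.

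First, the identities $[b,b]=0$ and $[x,x]=0$ are vacuous: in any Lie algebra $D([v,v])=[Dv,v]+[v,Dv]\equiv 0$, so they yield no constraint whatsoever. You therefore have no argument for killing the $a$-component of $D(b)$, nor for the $E$-components of $D(x)$ along this route.

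Second, the relations $[e_i,x]=x_i$, $[e_i,u]=u_i$, $[e_i,y]=y_i$ do \emph{not} strip off the $E$-, $x$-, or $u$-components of $D(x),D(u),D(y)$ as you claim. For instance, once the $u,y$-components of $De_i$ are gone, expanding gives
\[
D(x_i)=[De_i,x]+[e_i,Dx]=\sum_{j\ne i}\lambda_{e_j}(Dx)\,e_i\wedge e_j+\lambda_x(Dx)\,x_i+\lambda_u(Dx)\,u_i+\lambda_y(Dx)\,y_i,
\]
and every term on the right already lies in $\mathfrak z(\mathfrak n)=\Lambda^2E\oplus X\oplus U\oplus Y\oplus\langle f,h\rangle$, which is exactly where $D(x_i)$ must land since $x_i$ is central. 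No constraint on $\lambda_{e_j}(Dx)$ follows; the same failure occurs for $u$ and $y$. This also contaminates your $[u,y]=0$ step: the $f$-coefficient there reads $\lambda_a(Du)-\lambda_x(Dy)=0$, not $\lambda_a(Du)=0$, and $\lambda_x(Dy)$ has not been eliminated.

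The paper closes both gaps with relations you did not use. To kill $\lambda_{e_i}(Dx)$, $\lambda_{e_i}(Du)$ (and simultaneously $\lambda_b(Dy)$) it equates the two presentations $f=[x,u]=[a,y]$: the expansion $[Dx,u]+[x,Du]=[Da,y]+[a,Dy]$ has $u_i$-, $x_i$-, $y_i$-, and $c$-components appearing on one side only, forcing all those coefficients to vanish at once. For $\lambda_a(Db)$ the paper first computes $Dc=[Da,b]+[a,Db]\in\langle c,h\rangle$, then applies Leibniz to $[b,c]=0$: since $[b,Dc]=0$, one obtains $[Db,c]=\lambda_a(Db)\,h=0$. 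With these two repairs the remainder of your plan --- including the two-expressions-for-$D(h)$ device --- can be carried through.
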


\begin{proof}
We need to check that for every element $w\in\mathfrak{B}(B) $, the coordinates of $Dw$ are zero on the basis vectors located left to $w$, according to the order in $\mathfrak{B}(B) $.
We will use the following notation:
if $v\in\n$ and $w\in\mathfrak{B}(B) $, $\lambda_w(v)$ will be the $w$ coordinate of $v$.

Since $D$ is a derivation, we know that
\[
 D(\mathfrak{n}^{\prime })\subset \mathfrak{n}^{\prime },\quad
 D([\mathfrak{n,n}^{\prime }])\subset [ \mathfrak{n,n}^{\prime}],\quad
 D(\mathfrak{z(}\mathfrak{n)})\subset \mathfrak{z(}\mathfrak{n)}.
\]
In what follows, we will omit the parenthesis and write $Dv$ for $D(v)$.

\medskip
\noindent
(1) From the definition of $\mathfrak{n}$, it is clear that
$[\mathfrak{n},\mathfrak{n}^{\prime}]=\langle h\rangle $, so $Dh=\lambda_h(Dh) h$.

\medskip
\noindent
(2) Since $c=[a,b]$ we have
\begin{eqnarray*}
Dc&=&[Da,b]+[a,Db]\\
    &=&\la_a(Da)c-(\la_u(Da)+\la_y(Da))h+\la_b(Db)c+\la_y(Db)f+\la_c(Db)h\\
    &=&(\la_a(Da)+\la_b(Db))c+\la_y(Db)f+(-\la_u(Da)-\la_y(Da)+\la_c(Db))h.
\end{eqnarray*}
This is what we need for $Dc$.

\medskip
\noindent
(3) Since $f=[x,u]=[a,y]$,  then
$[ Dx,u]+[x,Du]=[Da,y]+[a,Dy]$.
In addition,
\begin{eqnarray*}
[Dx,u] &=&\sum \la_{e_i}(Dx)u_{i}+\la_b(Dx)h+\la_{x}(Dx)f \\
\lbrack x,Du] &=&-\sum \la_{e_i}(Du)x_{i}+\la_y(Du)h+\la_{u}(Du)f \\
\lbrack Da,y] &=&\sum \la_{e_i}(Da)y_{i}+(\la_b(Da)+\la_x(Da))h+\la_a(Da)f \\
\lbrack a,Dy] &=&\lambda_b(Dy) c+\la_c(Dy)h+\la_y(Dy)f
\end{eqnarray*}
and therefore
\begin{equation}\label{by}
\lambda_b(Dy)=0\quad\text{and}\quad\la_{e_i}(Dx)=\la_{e_i}(Du)=\la_{e_i}(Da)=0\text{ for all $1\le i\le n$}.
\end{equation}
We also conclude that
\begin{eqnarray*}
Df&=&\left(\la_b(Dx)+\la_y(Du)\right)h+\left(\la_x(Dx)+\la_u(Du)\right)f\\
    &=&\left(\la_b(Da)+\la_x(Da)+\la_c(Dy)\right)h+\left(\la_a(Da)+\la_y(Dy)\right)f.
\end{eqnarray*}
This is what we need for $Df$.

\medskip
\noindent
(4) Since $D\in\text{Der}(\mathfrak{n})_{1}$, we know that
$D(e_{i})\in W$ and hence, for all $1\le i<j\le n$,
\begin{eqnarray*}
D(e_{i}\wedge e_{j})&=&[D(e_{i}),e_{j}]+[e_{i},D(e_{j})] \\
&=& -\la_{x}(De_i) x_{j}-\la_{u}(De_i)u_{j}-\la_{y}(De_i)y_{j}
\\
&& \hspace{1cm}+
\la_{x}(De_j) x_{i}+\la_{u}(De_j) u_{i}+\la_{y}(De_j) y_{i},
\end{eqnarray*}
and this is what we need to prove for $D(e_{i}\wedge e_{j})$.

\medskip
\noindent
(5) Let $1\le i\le n.$ Since $[a,e_{i}]=0$, then
$[ Da,e_{i}]+[a,D(e_{i})]=0$ and
\begin{eqnarray*}
0&=&\sum_{j} \la_{e_j}(Da) e_j\wedge e_i-\la_{x}(Da) x_{i}-\la_{y}(Da)y_{i}-\la_{u}(Da)u_{i}\\
& &\hspace{1cm}+\la_{b}(De_i)c+\la_{y}(De_i)f+\la_{c}(De_i)h,
\end{eqnarray*}
therefore
\begin{eqnarray*}
0&=& \la_{e_j}(Da) \text{ for all } 1\le i \le n,\nonumber\\
0&=& \la_{x}(Da)=\la_{y}(Da)=\la_{u}(Da),\label{t1}\\
0&=& \la_{b}(De_i)=\la_{y}(De_i)=\la_{c}(De_i),\text{ for all } 1\le i\le n.\nonumber
\end{eqnarray*}
Hence we proved, among other things, what is needed for $Da$.

\medskip
\noindent
(6) Since $[b,e_{i}]=0$, then
$[ Db,e_{i}]+[b,D(e_{i})]=0$
and taking into account that $\la_{y}(De_i)=0$ (see (5)) we obtain
\[
0=\sum_{j} \la_{e_j}(Db) e_j\wedge e_i-\la_{x}(Db) x_{i}-\la_{u}(Db)u_{i}-\la_{y}(Db)y_{i}
-\la_{a}(De_i)c+\la_{u}(De_i)h
\]
and hence
\begin{eqnarray*}
0&=& \la_{e_j}(Db) \text{ for all } 1\le j \le n,\nonumber\\
0&=& \la_{x}(Db)=\la_{y}(Db)=\la_{u}(Db)\label{buy},\\
0&=& \la_{a}(De_i)=\la_{u}(De_i),\text{ for all } 1\le i\le n.\nonumber
\end{eqnarray*}
This is almost what we we needed for $Db$.
We now combine this and results from (2) and (5)
to obtain
\[Dc =[Da,b]+[a,Db] =\big(\la_{a}(Da)+\la_{b}(Db)\big)c+\la_{c}(Db)h,\]
and hence $[Dc,b]=0$.
Since $[b,c]=0$, it follows that $[Db,c]=0$ and thus  $\la_a(Db)=0$.
This completes what we need for $Db$.

\medskip
\noindent
(7) We now consider the cases of $x$, $u$, $y$.
\begin{enumerate}[(i)]

\item We first check that the $a$ and $b$ coordinates of $Dx$, $Du$, $Dy$ are zero.

We start with $Dx$: since $[b,x]=0$, we have $[Dx,b]+[x,Db]=0$,
and since $\lambda_c([x,Db])=0$, we obtain $\lambda_c([Dx,b])=0$.
This implies that $\lambda_a(Dx)=0$.

Repeating this argument and observing that $[a,x]=0$,
we will get that $Dx$ has no $b$ coordinate.

If now we do it considering $[a,u]=0$, we will get that $Du$ has no $b$ coordinate.

We notice that the same argument,
always analyzing the $c$ coordinate,
can be repeated using $[u,b]=h$ and $[b,y]=h$
respectively to conclude that $Du$ and $Dy$ don't have $a$ coordinates.

We have already seen in  (\ref{by}) that $Dy$ doesn't have a $b$ coordinate.

\item Let us consider now the $x$ and $u$
coordinates of $Dy$: $[u,y]=0$ implies $[Du,y]+[u,Dy]=0$ and, since we know
that the $a$ coordinate of $Du$ is 0, the $f$ coordinate of $[Du,y]$ is 0, and then $[u,Dy]$ has no $f$ coordinate, which implies that the $x$ coordinate of $Dy$ is 0.

Again, the same argument considering $[x,y]=h$ leads us to conclude that the $u$ coordinate of $Dy$ is 0.

\item We consider now the $x$ coordinate of $Du$. Being $[u,y]=0$, $[Du,y]+[u,Dy]=0$.
Looking at the $h$ coordinate of this sum, we get
\[0=\la_b(Du)+\la_x(Du)-\la_b(Dy).\]
We have just seen that $\la_b(Dy)=0=\la_b(Du)$, then $\la_x(Du)=0$, as we need.

\item  Finally, we just need to prove that the $u$ and $e_i$ coordinates of $Dy$ are zero.

Since $[x,y]=h$, and recalling that
$\la_u(Dx)=0$ and $\la_{e_i}(Dx)=0$, we have
\begin{eqnarray*}
\lambda_h(Dh) h&=&D(h)=[Dx,y]+[x,Dy]\\
&=&(\la_x(Dx)h-\sum \la_{e_i}(Dy) x_i+\la_u(Dy)f +\la_y(Dy) h,
\end{eqnarray*}
so $\la_u(Dy)=0$ and $\la_{e_i}(Dy)=0$ for all $1\le i\le n$.
\end{enumerate}

\medskip
\noindent
(8) For any $1\le i\le n$, we have that
\begin{eqnarray*}
D(x_{i})&=&D([x,e_{i}])=[Dx,e_{i}]+[x,D(e_{i})]\\
&=&-\la_x(Dx) x_{i}-\la_u(Dx)u_{i}-\la_y(Dy) y_{i}+\la_u(Dx) f+\la_y(Dx) h.
\end{eqnarray*}
On the other hand,  since we know from (7.iii) that $\la_x(Du)=0$,
\begin{eqnarray*}
D(u_{i})&=&D([u,e_i])=[Du,e_{i}]+[u,D(e_{i})]
\\&=&-\lambda_u(Du) u_{i}-\la_y(Du)
y_{i}-\la_x(De_i)f-\la_b(De_i)h.
\end{eqnarray*}
Finally, having in mind that the  $x$ and $u$ coordinates of $Dy$ are 0 (see (7.ii)),
\begin{eqnarray*}
D(y_{i})&=&D([y,e_{i}])=[Dy,e_{i}]+[y,D(e_{i})]
\\ &=& -\la_y(Dy) y_{i}-\la_a(De_i) f-(\la_b(De_i)+\la_x(De_i)) h.
\end{eqnarray*}
With this we conclude the cases of $x_i$, $u_i$ and $y_i$ and the proof is complete.
\end{proof}

\begin{proposition}\label{elediag}
Let $D\in \text{Der}(\mathfrak{n})_{1}$. For each element $v\in\mathfrak B(B)$, we denote by $\lambda_v$  the diagonal coefficient  of the matrix of $D$ corresponding to the vector  $v$. Then:
\begin{enumerate}[(a)]
\item
$\lambda_{e_i}=0,\quad 1\le i\le n$.
\item
$\lambda_a=\lambda_b=\lambda_x$.
\item
$\lambda_h =\lambda_f=3\lambda_a$.
\item
$\lambda_y=\lambda_u=\lambda_c=2\lambda_a$.
\end{enumerate}
\end{proposition}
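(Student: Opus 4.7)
Part (a) follows at once from the definition of $\text{Der}(\n)_1$: since $De_i\in W$ and $W$ contains none of the basis vectors $e_1,\dots,e_n$, the diagonal entry $\lambda_{e_i}$ of the matrix of $D$ is zero.

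For (b), (c) and (d), the plan is to exploit that each of $c$, $f$ and $h$ can be written as a bracket of two basis vectors, sometimes in several ways, and to read off the diagonal coefficient of $Dv$ from the Leibniz rule $Dv=[Ds,t]+[s,Dt]$ whenever $v=[s,t]$. The formulas for $Dc$ and $Df$ already computed in steps (2) and (3) of the proof of Proposition~\ref{triang} give immediately
\[
\lambda_c=\lambda_a+\lambda_b,\qquad \lambda_f=\lambda_x+\lambda_u=\lambda_a+\lambda_y.
\]
For $h$, one uses the four expressions $h=[a,c]=[b,u]=[b,y]=[x,y]$; a straightforward inspection of the bracket table shows that in each case the $h$-component of $[Ds,t]+[s,Dt]$ reduces to $\lambda_s+\lambda_t$, so
\[
\lambda_h=\lambda_a+\lambda_c=\lambda_b+\lambda_u=\lambda_b+\lambda_y=\lambda_x+\lambda_y.
\]

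Solving this small (overdetermined) linear system then finishes the argument. Comparing $\lambda_b+\lambda_u=\lambda_b+\lambda_y$ yields $\lambda_u=\lambda_y$; then $\lambda_x+\lambda_u=\lambda_a+\lambda_y$ forces $\lambda_x=\lambda_a$; and $\lambda_b+\lambda_y=\lambda_x+\lambda_y$ gives $\lambda_b=\lambda_x$, proving (b). Substituting back, $\lambda_c=2\lambda_a$ and $\lambda_h=\lambda_a+\lambda_c=3\lambda_a$; comparing with $\lambda_h=\lambda_b+\lambda_y=\lambda_a+\lambda_y$ yields $\lambda_y=2\lambda_a$ and hence $\lambda_u=2\lambda_a$, which together with $\lambda_c=2\lambda_a$ gives (d). Finally $\lambda_f=\lambda_a+\lambda_y=3\lambda_a=\lambda_h$, proving (c). There is no real obstacle beyond Proposition~\ref{triang} itself: once triangularity and the formulas for $Dc$ and $Df$ are in hand, the present proposition is essentially a bookkeeping step that reads off diagonal entries and solves the resulting linear relations.
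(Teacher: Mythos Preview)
Your argument is essentially the paper's own proof: derive seven linear relations among the diagonal entries from the bracket identities for $c$, $f$ and $h$, then solve. The computations you quote from steps (2) and (3) of Proposition~\ref{triang} give the relations for $\lambda_c$ and $\lambda_f$ exactly as you state, and your linear-algebra endgame matches the paper's.

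One point deserves more care than ``a straightforward inspection of the bracket table'': the relation $\lambda_h=\lambda_b+\lambda_u$ coming from $h=[b,u]$. Triangularity allows $Du$ to have a $y$-component (since $u$ precedes $y$ in $\mathfrak B$), and since $[b,y]=h$ this would contribute an extra term, giving only $\lambda_h=\lambda_b+\lambda_u+\lambda_y(Du)$. Without $\lambda_y(Du)=0$ your first deduction $\lambda_u=\lambda_y$ fails, and the remaining five clean relations leave one degree of freedom. The missing fact $\lambda_y(Du)=0$ does hold, but it is not part of lower-triangularity: it comes from applying $D$ to $[a,u]=0$, which gives $[Da,u]+[a,Du]=0$, and the $f$-component of $[a,Du]$ is $\lambda_y(Du)$ (since $[a,y]=f$) while $[Da,u]$ has no $f$-component. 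The paper's proof invokes step~(6) of Proposition~\ref{triang} at the analogous point; you should make explicit why no cross-term appears here.
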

\begin{proof}
(a) is obvious from the definition. We will prove next (b), (c) and (d).
From the previous proposition we know that the matrix of
$D$ is lower triangular. Hence:
\begin{enumerate}[(1)]
\item\label{1} $[x,u]=f$ implies $ [ D(x),u]+[x,D(u)]=D(f)$ and thus
$\lambda_x+\lambda_u=\lambda_f$.

\item\label{2}
$[a,y]=f$ implies $ [ D(a),y]+[a,D(y)]=D(f)$ and thus
$\lambda_a+\lambda_y=\lambda_f.$

\item\label{3}
$[a,c]=h$ implies $ [ D(a),c]+[a,D(c)]=D(h)$  and thus
$\lambda_a+\lambda_c=\lambda_h.$

\item\label{4}
$[b,y]=h$ implies $ [ D(b),y]+[b,D(y)]=D(h)$. We know from (6)
in the proof of the previous proposition, that the $u$ coordinate of $D(b)$ is 0, and thus
$\lambda_b+\lambda_y=\lambda_h$.

\item\label{5}
$[b,u]=h$ implies $ [ D(b),u]+[b,D(u)]=D(h)$. Also from (6) in the proof of the previous proposition,
we know that the $y$ coordinate of $D(b)$ is 0, and thus
$\lambda_b+\lambda_u=\lambda_h.$

\item\label{6}
$[x,y]=h$ implies $ [ D(x),y]+[x,D(y)]=D(h)$, and thus
$\lambda_x+\lambda_y=\lambda_h.$

\item\label{7}
$[a,b]=c$ implies $ [ D(a),b]+[a,D(b)]=D(c)$, and thus
$\lambda_a+\lambda_b=\lambda_c.$

\end{enumerate}

From (\ref{3}) and (\ref{7}), we have
$2\lambda_a+\lambda_b=\lambda_h$ and combining with (\ref{5}), we obtain
\begin{equation}\label{9}
2\lambda_a=\lambda_u.
\end{equation}
Substituting in (\ref{1}),
$\lambda_x+2\lambda_a=\lambda_f.$

From (\ref{4}) and (\ref{5}), we have
\begin{equation}\label{11}
\lambda_y=\lambda_u,
\end{equation}
from (\ref{1}) and (\ref{6}), we have
\begin{equation}\label{12}
\lambda_h=\lambda_f,
\end{equation}
from (\ref{2}), (\ref{4}) and (\ref{6}),
$\lambda_a=\lambda_b=\lambda_x,$
and this proves (b).
From this and (\ref{12}) we obtain (c), that is,
$\lambda_h =\lambda_f=3\lambda_a.$

From (\ref{3}) and  (\ref{4}) it follows $\lambda_y=\lambda_c$ and from this
and  (\ref{11}) and (\ref{9}), we obtain (d).
This ends the proof of the proposition.
\end{proof}

As a consequence, we obtain the following proposition that describes the Levi decomposition of $\text{Der}(\mathfrak{n})$.

\begin{proposition}\label{thm.Levi}
Let $\text{Der}(\mathfrak{n})$ be the Lie algebra of derivations of $\n$,
and let $\text{Der}(\mathfrak{n})_{0}$ and
$\text{Der}(\mathfrak{n})_{1}$ be the Lie subalgebras of $\text{Der}(\mathfrak{n})$ defined previously.
Then:
\
\begin{enumerate}[(a)]
\item $\text{Der}(\mathfrak{n})_{0}$ is a Lie subalgebra of  $\text{Der}(\mathfrak{n})$ isomorphic to ${\mathfrak{gl}}(E)$.
\item $\text{Der}(\mathfrak{n})_{1}$ is a solvable ideal of $\text{Der}(\mathfrak{n})$.
\item $\text{Der}(\mathfrak{n})=\text{Der}(\mathfrak{n})_{0}\oplus \text{Der}(\mathfrak{n})_{1}$.
\end{enumerate}
\end{proposition}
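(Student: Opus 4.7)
\medskip

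\noindent\emph{Plan of proof.} All three parts rest on Propositions~\ref{triang} and~\ref{elediag}. The key structural remark is that in the ordered basis $\mathfrak{B}$ the initial segment is exactly $E$ and the complementary segment is exactly $W$; hence for any $D\in\text{Der}(\n)_1$, lower triangularity (Proposition~\ref{triang}) is \emph{equivalent} to the invariance $D(W)\subset W$. This single observation, combined with the block-diagonal shape in~\eqref{d0}, drives everything that follows.

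For~(a), define $A\mapsto D_A$ from $\mathfrak{gl}(E)$ to $\End(\n)$ via the blocks in~\eqref{d0}. That $D_A$ is a derivation is checked by inspection of the bracket table: $[e_i,x]=x_i$, $[e_i,u]=u_i$ and $[e_i,y]=y_i$ are respected because the blocks on $X$, $U$ and $Y$ are each $[A]_B$; the relation $[e_i,e_j]=e_i\wedge e_j$ is respected by the definition of $\Lambda^2 A$; every other bracket involves at most one factor in $E$, with the remaining factors in $\ker D_A$. The map is manifestly an injective Lie algebra homomorphism. Surjectivity onto $\text{Der}(\n)_0$ is immediate because the table shows that $E\cup\{a,b,x,u,y\}$ generates $\n$ as a Lie algebra, so any $D\in\text{Der}(\n)_0$ is determined by its restriction $A:E\to E$ and coincides with $D_A$.

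For~(c), given $D\in\text{Der}(\n)$ decompose $D(e_i)=A(e_i)+w_i$ with $A(e_i)\in E$ and $w_i\in W$, and put $D_0=D_A$, $D_1=D-D_0$. By construction $D_1(E)\subset W$, so $D_1\in\text{Der}(\n)_1$ and $D=D_0+D_1$. For the intersection, any $D\in\text{Der}(\n)_0\cap\text{Der}(\n)_1$ satisfies $D(E)\subset E\cap W=0$ together with $D(a)=D(b)=D(x)=D(u)=D(y)=0$; since these conditions kill a generating set of $\n$, they force $D=0$.

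For~(b), solvability follows from Proposition~\ref{triang}: the assignment $D\mapsto [D]_{\mathfrak{B}(B)}$ embeds $\text{Der}(\n)_1$ into the solvable Lie algebra of lower triangular matrices. To see it is an ideal, take $D\in\text{Der}(\n)$ and $D_1\in\text{Der}(\n)_1$, and write $D=D_A+D_1'$ using~(c). The opening remark gives $D_1(W),D_1'(W)\subset W$, while~\eqref{d0} gives $D_A(W)\subset W$; so for $e\in E$,
\[
[D,D_1](e)=D\bigl(D_1(e)\bigr)-D_1\bigl(D(e)\bigr)\in D(W)+D_1(E\oplus W)\subset W,
\]
whence $[D,D_1]\in\text{Der}(\n)_1$. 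The only delicate point in the whole argument is recognizing the equivalence between lower triangularity with respect to the specific ordering in $\mathfrak{B}$ and the invariance $D(W)\subset W$; once this is in hand, no step presents a real obstacle.
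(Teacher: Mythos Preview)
Your proof is correct and follows essentially the same route as the paper, with one inaccuracy and one improvement worth noting.

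The claimed ``equivalence'' in your opening remark is wrong: lower triangularity with respect to the full ordering on $\mathfrak{B}$ is strictly stronger than the block condition $D(W)\subset W$ (for instance, a map sending $b\mapsto a$ preserves $W$ but is not lower triangular in $\mathfrak{B}$). However, you only ever invoke the correct implication (lower triangular $\Rightarrow$ $D(W)\subset W$), so the argument is unaffected; just drop the word ``equivalent''.

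On the other hand, your treatment of~(b) is more complete than the paper's. The paper justifies only solvability from Proposition~\ref{triang} and does not argue the ideal property at all. Your computation of $[D,D_1](e)$ using $D(W)\subset W$ (obtained from the decomposition in~(c) together with~\eqref{d0} and Proposition~\ref{triang}) is exactly what is needed to close that gap. Parts~(a) and~(c) match the paper's argument, with somewhat more detail supplied in~(a).
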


\begin{proof}
(a) has been already discussed when we defined $\text{Der}(\mathfrak{n})_{0}$,
and (b) is a consequence of the fact that the matrix of any  $D\in\text{Der}(\mathfrak{n})_{1}$ is
lower triangular in any basis $\mathfrak B(B)$ of $\n$.

To prove (c), let us see first that the sum is direct.
If $D\in \text{Der}(\mathfrak{n})_{0}\cap \text{Der}(\mathfrak{n})_{1}$,
then $D(E)\subset E$, $D(E)\subset W$ and hence $D(E)=0$.
Since $E$ and $a,b,x,u,y$ generate $\n$ as a Lie algebra, it follows that $D=0$.

Now, we will see that $\text{Der}(\mathfrak{n})_{0}+ \text{Der}(\mathfrak{n})_{1}=\text{Der}(\mathfrak{n}).$
Given $D\in \text{Der}(\mathfrak{n})$, let $A=p_E\circ D|_{E}\in\mathfrak{gl}(E)$ where
$p_E$ the projection over $E$ with respect to the decomposition $\n=E\oplus W$.
Let $D_{0}\in \text{Der}(\mathfrak{n})_{0}$ be the derivation associated to $A$.
Since the matrix of $D_{0}$ in a basis $\mathfrak B$ is of the form (\ref{d0}),
it follows that $D_{1}=D-D_{0}\in \text{Der}(\mathfrak{n})_{1}$
This proves (c).
\end{proof}

\begin{proposition}\label{Thm.Multiplicidades}
Let $D\in \text{Der}(\mathfrak{n})$ be a diagonalizable derivation
with eigenvalues 1, 2 and 3, then the dimension of the eigenspaces are $d_1$, $d_2$ and $d_3$ respectively
(see Proposition \ref{prop.dim}). In particular, if
$\tilde\n_{1}\oplus\tilde\n_{2}\oplus\tilde\n_{3}$ is any grading of $\n(n)$, then $\dim\tilde\n_{i}=d_i$, $i=1,2,3$.
\end{proposition}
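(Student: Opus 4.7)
The approach is to reduce the problem to reading the characteristic polynomial of $D$ off a lower triangular matrix. Decompose $D=D_0+D_1$ as in Proposition \ref{thm.Levi}, with $D_0$ corresponding to $A\in\mathfrak{gl}(E)$, and extend scalars to the algebraic closure if necessary (this does not affect the dimensions of the eigenspaces of $D$ for eigenvalues in the base field). Choose a basis $B$ of $E$ in which $[A]_B$ is lower triangular.

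The first step is to show that $[D]_{\mathfrak{B}(B)}$ is lower triangular. Proposition \ref{triang} handles $[D_1]_{\mathfrak{B}(B)}$. For $D_0$, the block description (\ref{d0}) reduces the claim to checking that $\Lambda^2 A$ is lower triangular on $\Lambda^2 E$ in lexicographic order: if $A(e_k)\in\langle e_l : l\geq k\rangle$, then every non-vanishing summand of $\Lambda^2 A(e_i\wedge e_j)=A(e_i)\wedge e_j+e_i\wedge A(e_j)$ is $\pm e_{i'}\wedge e_{j'}$ with $(i',j')\geq(i,j)$ in lex order.

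Once triangularity is established, the diagonal entries list the eigenvalues of $D$ with algebraic multiplicity. Writing $\alpha_1,\dots,\alpha_n$ for the diagonal entries of $[A]_B$ and $\lambda:=\lambda_a$, the description (\ref{d0}) together with Proposition \ref{elediag} gives the diagonal of $[D]_{\mathfrak{B}(B)}$ as $\alpha_i$ at $e_i$; $\lambda$ at $a,b,x$; $2\lambda$ at $u,y,c$; $\alpha_i+\alpha_j$ at $e_i\wedge e_j$; $\alpha_i+\lambda$ at $x_i$; $\alpha_i+2\lambda$ at $u_i$ and $y_i$; and $3\lambda$ at $f,h$. Since each of these values must lie in $\{1,2,3\}$, the constraints $\lambda\in\{1,2,3\}$ (from $a,b,x$) and $3\lambda\in\{1,2,3\}$ (from $f,h$) force $\lambda=1$, and then $\alpha_i+2\in\{1,2,3\}$ combined with $\alpha_i\in\{1,2,3\}$ forces $\alpha_i=1$ for every $i$. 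Counting occurrences yields multiplicity $n+3=d_1$ for the eigenvalue $1$, multiplicity $2+\binom{n}{2}+1+n=d_2$ for $2$, and multiplicity $2n+2=d_3$ for $3$; since $D$ is diagonalizable, these are the eigenspace dimensions.

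For the ``in particular'' assertion, a grading $\n=\tilde{\n}_1\oplus\tilde{\n}_2\oplus\tilde{\n}_3$ defines a diagonalizable derivation $D$ acting as $i\cdot\id$ on $\tilde{\n}_i$, with eigenvalues $1,2,3$ and eigenspaces $\tilde{\n}_i$, so the first part yields $\dim\tilde{\n}_i=d_i$. The main delicate step is the lower-triangular form of $[D]_{\mathfrak{B}(B)}$: Proposition \ref{triang} covers $D_1$, and the remaining issue is verifying that the blockwise structure of $D_0$ (in particular the lex-order triangularity of $\Lambda^2 A$) is compatible with the chosen ordering of $\mathfrak{B}(B)$; the subsequent arithmetic forcing $\alpha_i=\lambda=1$ is then routine.
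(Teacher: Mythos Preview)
Your argument is correct and follows essentially the same route as the paper: decompose $D=D_0+D_1$ via Proposition~\ref{thm.Levi}, put the matrix in lower triangular form in a basis $\mathfrak{B}(B)$, read off the diagonal using Proposition~\ref{elediag} and the block description~(\ref{d0}), and then force $\lambda=1$ and $\alpha_i=1$ from the constraint that every diagonal entry lies in $\{1,2,3\}$. The one difference is that the paper asserts $A$ is diagonalizable (since $D$ is) and works with a diagonal $[A]_B$, whereas you merely triangularize $A$ after extending scalars; your version sidesteps the need to justify diagonalizability of $A$ and requires the extra (but easy) check that $\Lambda^2 A$ is lower triangular in lex order, which you carry out correctly.
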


\begin{proof}
Suppose $D=D_{A}+D_{1}$ where $D_{A}\in \text{Der}(\mathfrak{n})_{0}$ and
$D_{1}\in \text{Der}(\mathfrak{n})_{1}.$
Since $D$ is diagonalizable, then $A$ is diagonalizable as well.
Then we can choose a basis $B$ of $E$
such that the matrix of $D_A$ in the  basis $\mathfrak{B}=\mathfrak{B}(B)$
is diagonal (see (\ref{d0})).
Since $D_1$ has a lower triangular matrix in the basis $\mathfrak{B}$, 
the matrix of $D$ in this basis is lower triangular.

As in Proposition \ref{elediag}, for each $v\in\mathfrak B $, we denote by $\lambda_v$
the diagonal coefficient  of the matrix of $D$ corresponding to the vector  $v$.
It is clear that $\{\lambda_v:v\in\mathfrak B \}$ are the eigenvalues of $D$ counted with multiplicity.
Now, $\lambda_v$ is either equal to 1, 2 or 3 for all $v\in\mathfrak B $.
This, together with Proposition \ref{elediag} and the shape of the matrix of $D_A$, implies that
\[
 \lambda_a=\lambda_b=\lambda_x=1,\quad \lambda_y=\lambda_u=\lambda_c=2,\quad\lambda_h =\lambda_f=3.
\]
Finally, since $D$ is a Lie algebra homomorphism, we obtain that
\[
 \lambda_{u_i}=\lambda_{e_i}+\lambda_u,\quad \lambda_{y_i}=\lambda_{e_i}+\lambda_y,\quad
 \lambda_{ x_i}=\lambda_{ e_i}+\lambda_x,\quad\lambda_{ e_i\wedge  e_j}=\lambda_{ e_i}+\lambda_{ e_j}
\]
and hence $\lambda_{e_i}=1$, $\lambda_{u_i}=3$, $\lambda_{ y_i}=3$,
$\lambda_{ y_i}=2$ and $\lambda_{ e_i\wedge  e_j}=2$.
Counting the number of eigenvalues, we obtain that the multiplicities of
 1, 2 are 3 are respectively $d_1$, $d_2$ and $d_3$.
\end{proof}

\begin{proposition}\label{teo416}
If $n\ge 17$, then
$
L(p)<2^{\dim\z}$
for all gradings $\n(n)=\n_1+\n_2+\n_3$ of $\n(n)$.
\end{proposition}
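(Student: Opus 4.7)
The plan is to first use Proposition \ref{Thm.Multiplicidades} to reduce the statement to an inequality about a single explicitly determined polynomial, and then to bound the length of that polynomial by analytic norms on the unit circle.

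For the reduction, Proposition \ref{Thm.Multiplicidades} asserts that any $\Z$-grading $\n(n) = \n_1 \oplus \n_2 \oplus \n_3$ of $\n(n)$ has $\dim \n_i = d_i$, with $d_1 = n+3$, $d_2 = n(n+1)/2+3$, $d_3 = 2n+2$ as in Proposition \ref{prop.dim}. Hence the associated polynomial is the same for every such grading, namely
\[
p(x) = (1-x)^{d_1}(1-x^2)^{d_2}(1-x^3)^{d_3},
\]
and the statement reduces to $L(p) < 2^{(n+4)(n+1)/2}$ for all $n \ge 17$.

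For the analytic estimate, the naive submultiplicative bound $L(p) \le 2^{d_1+d_2+d_3}$ exceeds $2^{\dim \z}$ by a factor of $2^{n+6}$, so we must exploit the interference between the three factors. A convenient route is the Cauchy--Schwarz inequality $L(p)^2 \le (\deg p + 1)\,\|p\|_{L^2(\mathbb T)}^2$ together with the explicit formula
\[
|p(e^{i\theta})|^2 = 4^{d_1+d_2+d_3}\sin^{2d_1}(\theta/2)\sin^{2d_2}\theta\sin^{2d_3}(3\theta/2),
\]
and a saddle-point evaluation of $\|p\|_{L^2}^2 = \frac{1}{2\pi}\int_0^{2\pi}|p(e^{i\theta})|^2\,d\theta$. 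Since $d_2=\Theta(n^2)$ dominates $d_1+d_3=\Theta(n)$, the integrand concentrates near two critical points of $|p|$ close to $\pm\pi/2$, satisfying $\tfrac{d_1}{2}\cot(\theta/2)+d_2\cot\theta+\tfrac{3d_3}{2}\cot(3\theta/2)=0$, with curvature $-(d_1+2d_2+9d_3)/2$. A Gaussian expansion then gives $\log_2\|p\|_{L^2}^2 = 2d_2 + (d_1+d_3) + O(\log n)$, which combined with the Cauchy--Schwarz prefactor yields $\log_2 L(p) \le d_2 + (d_1+d_3)/2 + O(\log n)$. Since $\dim\z = d_2 + 2n - 1 + O(1)$ and $(d_1+d_3)/2 - (2n-1) = (7-n)/2 < 0$ for $n > 7$, the bound $L(p) < 2^{\dim \z}$ holds asymptotically, and a careful tracking of the constants pinpoints the effective threshold $n \ge 17$.

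The main obstacle is the sharpness of the estimate. The Cauchy--Schwarz bound $L(p) \le \sqrt{\deg p + 1}\,\|p\|_{L^2}$ is loose by roughly a factor of $n^{O(1)}$, and the Laplace asymptotics hide constants whose effect is non-negligible near the threshold. To make the bound effective at exactly $n=17$ one may need to refine the estimate --- for example by exploiting the palindromy $a_k = a_{D-k}$ of $p$, or by working directly with the coefficient formula $a_k = \sum_{i+2j+3l=k}(-1)^{i+j+l}\binom{d_1}{i}\binom{d_2}{j}\binom{d_3}{l}$ --- and possibly to complement the asymptotic estimate with an explicit numerical computation of $L(p)$ for the small values $n = 17, 18, \dots$ up to which the asymptotic regime becomes dominant.
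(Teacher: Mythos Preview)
Your reduction step is correct and identical to the paper's: Proposition~\ref{Thm.Multiplicidades} forces $\dim\n_i=d_i$ for any grading, so a single explicit polynomial must be bounded.

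From that point on, however, what you present is a plan rather than a proof. The Laplace estimate of $\|p\|_{L^2}$ is only asserted: you do not locate the critical point rigorously, you do not bound the contribution away from it, and---as you yourself note---the constants hidden in the $O(\log n)$ terms are precisely what determines whether the inequality kicks in at $n=17$ or only much later. As written, the argument proves the inequality for all sufficiently large $n$, with an unspecified threshold, and defers the remaining cases to a numerical computation you do not carry out. That is a genuine gap.

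The paper's proof is entirely different and completely elementary: it uses only the submultiplicativity $L(pq)\le L(p)L(q)$ together with the trivial identity $L\big((1-x^k)^m\big)=2^m$. After checking $17\le n\le 200$ by computer, it factors
\[
\frac{p(x)}{2^{\dim\z}}=\frac{(1-x)^{n}(1-x^2)^{2n}(1-x^3)^{2n}}{2^{4n}}\cdot
\frac{(1-x^2)^{\frac{n(n-3)}{2}+3}}{2^{\frac{n(n-3)}{2}+3}}\cdot 2(1-x)^{3}(1-x^3)^{2},
\]
observes that the second factor has length $\le 1$ and the third $\le 64$, and is reduced to showing $L(p_n)<1/64$ for $p_n(x)=(1-x)^n(1-x^2)^{2n}(1-x^3)^{2n}/2^{4n}$. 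A second finite check gives $L(p_n)<1/2$ for $30\le n\le 180$, and then for $n>180$ the self-similar bound
\[
L(p_n)\le L(p_{30})^{5}\,L(p_{n-150})\le (1/2)^{6}=1/64
\]
finishes the argument by induction. This route requires no asymptotic analysis at all---only submultiplicativity and a bounded amount of explicit computation---whereas your analytic approach, even if completed, would need at least as much numerical work at the low end and considerably more care to make the tail estimate effective.
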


\begin{proof}
As a consequence of Proposition \ref{Thm.Multiplicidades},
the numbers $d_1$, $d_2$ and $d_3$ are independent of the grading of $\n(n)$.
Since
\[
d_{1}=n+3;\quad d_{2}=3+\frac{n(n+1)}{2};\quad d_{3}=2(n+1),\quad z=\frac{(n+4)(n+1)}{2},
\]
we need to prove that
$
L\left(\frac{(1-x)^{n+3}(1-x^2)^{\frac{n(n+1)}{2}+3}(1-x^3)^{2n+2}}{2^{\frac{(n+4)(n+1)}{2}}}\right)
< 1
$
for $n\ge 17$.

We have checked this for  $n=17,\dots, 200$ computationally, thus we must prove it for $n>200$.
We start by rearranging the factors of the polynomial in the following way:
\[
\frac{(1-x)^{n}(1-x^2)^{2n}(1-x^3)^{2n}}{2^{4n}}\cdot
\frac{(1-x^2)^{\frac{n(n-3)}{2}+3}}{{2^{\frac{n(n-3)}{2}+3}}}
\cdot 2(1-x)^{3}(1-x^3)^{2}.
\]
Since
$L\left(\frac{(1-x^2)^{\frac{n(n-3)}{2}+3}}{{2^{\frac{n(n-3)}{2}+3}}}\right)\le 1$,
$L\left(2(1-x)^{3}(1-x^3)^{2}\right)\le 64$,
and $L(pq)\le L(p)L(q)$,
we only need to show that
\[
 L(p_n)<\frac{1}{64}, \qquad \text{ where }\quad p_n(x)=\frac{(1-x)^{n}(1-x^2)^{2n}(1-x^3)^{2n}}{2^{4n}},
\]
for $n\ge 200$.
We checked computationally that
$L(p_n)<\frac{1}{2}$
 for $n=30,\dots,180$.
Now, if $n>180$, then  $n-150>30$, and arguing by induction, we obtain
\[L(p_n)\le L(p_{30})^5L(p_{n-150})\le \left(\frac{1}{2}\right)^6=\frac{1}{64}.\qedhere\]
\end{proof}




\begin{thebibliography}{PT}
\addcontentsline{toc}{section}{References}

\bibitem[DS]{DS} Deninger, Ch. and Singhof, W.,
\emph{On the cohomology of nilpotent Lie algebras},
Bul. Soc. Math France, Vol. \textbf{116}, (1988), 3-14.
\bibitem[H]{Hal} Halperin, S.,
\emph{ Rational homotopy and torus actions},
Aspects of Topology in London Math, Soc. Lecture Note Ser., Vol. \textbf{93}, (1985), 293-305.
\bibitem[Ma]{Ma} Magnin, L;
\emph{Adjoint and Trivial Cohomology Tables for Indecomposable Nilpotent Lie Algebras of Dimension $\le 7$ over $\C$},
On-line e-book, 2nd Corrected Edition 2007, http://magnin.perso.math.cnrs.fr/
\bibitem[N]{N} Nomizu, K.,
\emph{On the cohomology of compact homogeneous spaces of nilpotent Lie groups},
Ann. of Math. (2) \textbf{59}, (1954). 531-538.
\bibitem[PT]{PT} Pouseele, H. and Tirao, P.,
\emph{Constructing Lie algebra homology classes},
J. Algebra \textbf{292} (2005), no. 2, 585-591.
\bibitem[Y]{Y} Yu, L. \emph{Small covers and Halperin-Carlsson conjecture},
Pac. J. Math., Vol. \textbf{256} (2012)No. 2, 489--507.

\end{thebibliography}
\end{document}